\newtheorem{definition}{Definition}
\newtheorem{lemma}{Lemma}
\newtheorem{theorem}{Theorem}
\author[O.~Firman and J. Spoerhase]{Oksana Firman\affiliationmark{1}
	\and Joachim Spoerhase\affiliationmark{2}}
\title[Hypergraph Representation via Axis-Aligned Point-Subspace Cover]{Hypergraph Representation via Axis-Aligned Point-Subspace Cover}
\affiliation{
	Universit\"at W\"urzburg, W\"urzburg, Germany\\
	University of Liverpool, Liverpool, United Kingdom}
\keywords{hypergraph, point-line cover, graph representation}
\begin{document}
	\publicationdata
	{vol. 27:2}
	{2025}
	{3}
	{10.46298/dmtcs.11676}
	{2023-07-31; 2023-07-31; 2025-01-16}
	{2025-02-08}
	\maketitle
\begin{abstract}
	\vspace{2ex}
	A $k$-hypergraph is a $k$-partite $k$-uniform hypergraph, that is, a hypergraph with a partition of vertices into $k$ parts such that each hyperedge contains exactly one vertex of each part. We propose a new geometric representation of $k$-hypergraphs. Namely, given positive integers $\ell, d$, and $k$ with $\ell\leq d-1$ and $k={\binom{d}{\ell}}$, any finite set $P$ of points in~$\mathbb{R}^d$ represents a $k$-hypergraph $G_P$ as follows. Each point in $P$ is covered by $k$ many axis-aligned affine $\ell$-dimensional subspaces of $\mathbb{R}^d$, which we call $\ell$-subspaces for brevity and which form the vertex set of $G_P$. We interpret each point in~$P$ as a hyperedge of $G_P$ that contains each of the covering $\ell$-subspaces as a vertex. The class of \emph{$(d,\ell)$-hypergraphs} is the class of $k$-hypergraphs that can be represented in this way. The resulting classes of hypergraphs are fairly rich, since every $k$-hypergraph is a $(k,k-1)$-hypergraph. On the other hand, for $\ell<d-1$, there exists a $k$-hypergraph which is not a $(d,\ell)$-hypergraph.
	
	In this paper we give a natural structural characterization of $(d,\ell)$-hypergraphs based on vertex cuts. This characteriza\-tion leads to a poly\-nomial-time recognition algorithm that decides for a given $k$-hypergraph $G$ whether or not $G$ is a $(d,\ell)$-hypergraph and that computes a representation of $G$ if one exists. Here we assume that the dimension $d$ is constant and that the partition of the vertex set of $G$ is prescribed.
\end{abstract}

\section{Introduction}

Geometric representations of graphs or hypergraphs provide a wide and intensively studied field of research in the intersection of combinatorial geometry and graph theory. Well-known examples are geometric intersection or incidence graphs, with a large amount of literature, such as \cite{McKeeMcMorris1999}, \cite{coxeter1950}, and \cite{Pisanski_bridgesbetween}. The benefits of adopting these two perspectives---the geometric and the graph-theoretic one---are as follows. On the one hand, knowing that a given graph can be represented geometrically may give new insights about its structure, because the geometric perspective is often rather intuitive. On the other hand, giving a graphical characterization for certain types of geometric objects may help pin down the essential combinatorial properties that can be exploited in the geometric setting.

One example of this interplay is the study of geometric set cover and hitting set problems, as explored by \cite{bronnimann95-set-covers}, \cite{varadarajan10-weighted-geom-set-cover}, and \cite{chan12-quasi-uniform-sampling}. In this important branch of geometric optimization, incidence relations of two types of geometric objects are studied where one object type is represented by vertices of a hypergraph whose hyperedges are, in turn, represented by the other object type. In this representation a vertex is contained in a hyperedge if and only if the corresponding geometric objects have a certain geometric relation such as containment or intersection. The objective is to find the minimum number of vertices hitting all hyperedges\footnote{For the sake of presentation, we use here the representation as hitting set problem rather than the equivalent and maybe more common geometric set cover interpretation.}. In this line of research, the goal is to exploit geometric properties in order to improve upon the state of the art for general hypergraphs. This is known to be surprisingly challenging even in many seemingly elementary settings.

For example, in the well-studied point-line cover problem, as studied by~\cite{hassin-megiddo-point-line-cover} and \cite{kratsch14-point-line-cover}, we are given a set of points in the plane and a set of lines. The goal is to identify a smallest subset of the lines to cover all the points. This problem can be cast as a hypergraph vertex cover problem. Points can be viewed as hyperedges containing the incident lines as vertices. The objective is to cover all the hyperedges by the smallest number of vertices.

It seems quite clear that point-line cover instances form a heavily restricted subclass of general hypergraph vertex cover. For example, they have the natural intersection property that two lines can intersect in at most one point. However, somewhat surprisingly, in terms of approximation algorithms, no worst-case result improving the ratios for general hypergraph vertex cover is known~\citep{ahk-tlcph-C96,guruswami10-hypergraph-vc,chvatal79-set-cover}. In fact, it has been shown that merely exploiting the above intersection property in the hypergraph vertex cover is not sufficient to give improved approximations, as showed by~\cite{kumar00-set-cover-intersection-one}. Giving a simple combinatorial characterization of the point-line cover instances seems to be challenging.

In this paper, we study a representation of hypergraphs that arises from a natural variant of point-line cover where we want to cover a given set $P$ of points in $\mathbb{R}^d$ by \emph{axis-parallel} lines, see \cref{fig:example} for an illustration. While the axis-parallel case of point-line cover has been considered before by~\cite{gaur2007covering}, the known algorithms do not improve upon the general case of hypergraph vertex cover, as studied by~\cite{ahk-tlcph-C96,chvatal79-set-cover}. More generally, we investigate the generalization where we are additionally given a parameter $\ell\leq d-1$ and we would like to cover $P$ by axis-aligned affine $\ell$-dimensional subspaces of $\mathbb{R}^d$, which we call $\ell$-subspaces. The resulting class of hypergraphs is fairly rich as any $k$-partite $k$-uniform hypergraph (i.e., a hypergraph with a partition of vertices into $k$ parts such that each hyperedge contains exactly one vertex of each type) can be represented by a set of points in $\mathbb{R}^k$ to be covered by $(k-1)$-subspaces. On the other hand, for $\ell<d-1$, there exists a $k$-partite $k$-uniform hypergraph which is not a \emph{$(d,\ell)$-hypergraph}, i.e., can not be represented by a set of points in $\mathbb{R}^d$ to be covered by $\ell$-dimentional subspaces of~$\mathbb{R}^d$.

Note that for any sufficiently large set $X$, a similar representation can be defined in the power
$X^d$, where hyperedges correspond to elements of $X^d$ and vertices correspond to subsets of $X^d$ with $d-\ell$ coordinates fixed. We keep $X=\mathbb R$ to provide more intuitive representations.

\begin{figure}[tb]
	\centering
	\subfigure[]{
		\centering
		\includegraphics[scale=0.85]{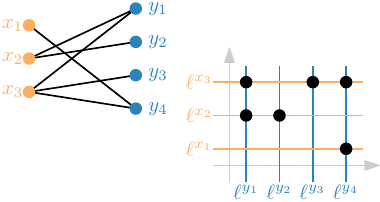}
		\label{fig:plane-ex}}
	\hfill
	\subfigure[]{
		\centering
		\includegraphics[scale=0.85]{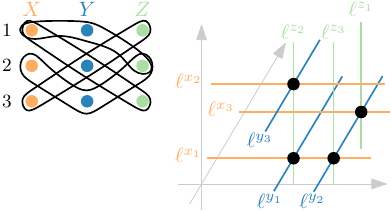}
		\label{fig:space-ex}}
	\caption{A graph~\subref{fig:plane-ex} and a hypergraph~\subref{fig:space-ex} and their representations in $\mathbb{R}^2$ and $\mathbb{R}^3$, respectively.}
	\label{fig:example}
\end{figure}

\paragraph*{Related work.} The question of representing (hyper-)graphs geometrically is related to the area of \emph{graph drawing}. For example, \cite{evans-etal:hypergraph-drawing-3d} study drawing hypergraphs in $\mathbb{R}^3$ by representing vertices as points and hyperedges as convex polygons while preserving incidence relations. Note that in our work we study the ``dual'' representation, where hyperedges are represented as points and vertices are represented as axis-parallel lines or affine subspaces. Another related problem in graph drawing has been introduced by \cite{chaplick-etal16:gd-few-lines-planes}. They study drawings of graphs in finite-dimensional
Euclidean spaces where the vertices (and the edges, in some variants) of the graph can be covered by a minimum number of lines or hyperplanes.

\paragraph{Our contribution and outlook.}
We initiate the study of representing \emph{$k$-hy\-per\-graphs}, i.e., $k$-partite $k$-uniform hypergraphs, via axis-aligned point-subspace cover instances in this generality. Our main insight is that the axis-aligned case of point-subspace cover allows for a natural, combinatorial characterization contrasting what is known for the non-aligned case  (see discussion above). The characterization is based on vertex cuts and can be leveraged to obtain a polynomial time recognition algorithm for such hypergraphs assuming the dimension $d$ is a constant and that we are given the partition of the vertices, which is NP-hard to compute for $k \geq 3$~\citep{isy-rsnfauep-CPM05}.

We believe that it is an interesting future research direction to exploit these combinatorial properties in order to obtain improved results for various optimization problems in hypergraphs such as hypergraph vertex cover or hypergraph matching. We also hope that our combinatorial characterization may help make progress on geometric problems. We conclude our paper by an outlook containing related open questions and some first motivating results in this direction.

\paragraph{Notation and problem definition.}
We use $d$ and $\ell$ to denote the dimensions of the space and the axis-aligned subspaces, respectively.
We use $k= {\binom{d}{\ell}}$ to denote the number of parts in the
corresponding hypergraph. For the special case of point-line cover
that we consider in \cref{sec:repgraph} we have $k=d$.
Let $[k] = \{1, \ldots, k\}$ for any $k \in \mathbb{N}$.

Let $P$ be a finite set of points in $\mathbb{R}^d$. We define the
$k$-hypergraph $G_P$ as follows. The vertex set of $G_P$ is the set
of axis-parallel affine subspaces (lines in the special case)
containing at least one point in $P$. The hyperedges in $G_P$
correspond to the points in $P$. The hyperedge corresponding
to a point $p\in P$ contains the $k$ axis-parallel subspaces (lines)
incident to $p$ as vertices. Note that $G_P$ is $k$-partite and $k$-uniform where the $k$ parts
of the partition correspond to $\binom{d}{\ell}$ ($d$ in the special
case of point-line cover) sets of axis-aligned subspaces.

Our main task is to decide, for a given hypergraph
$G = (V,E)$ with the partition $V= V_1 \cup V_2  \cup \ldots \cup V_k$ of the vertex set $V$ of $G$, whether there is an axis-aligned point-subspace cover (or point-line cover in the special case)
instance $P$ such that $G$ and $G_P$ are isomorphic.
In this case we say that
$G$ is \emph{represented} by $P$ and, thus, \emph{representable}.
A representable hypergraph we also call
\emph{$(d,\ell)$-hypergraph}.
We assume that the partition of~$G$ into $k$ parts is given.	

\begin{figure}[tb]
	\centering
	\subfigure[]{
		\centering
		\includegraphics[]{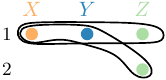}
		\label{fig:negative-ex-2edges}}
	\hspace{3cm}
	\subfigure[]{
		\centering
		\includegraphics[]{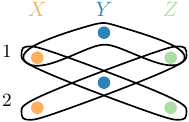}
		\label{fig:negative-ex-3edges}}
	\caption{Two hypergraphs that cannot be represented by a set of points in $\mathbb{R}^3$ to be covered by lines. In \subref{fig:negative-ex-2edges}, any two lines corresponding to the shared vertices already define a unique point; in~\subref{fig:negative-ex-3edges}, one of the points representing a hyperedge would have to lie on two skew lines, which is impossible.}
	\label{fig:negative-ex}
\end{figure}

We remark that every bipartite graph is representable in $\mathbb{R}^2$
by point-line cover because we can derive a grid-like representation
as shown in \cref{fig:plane-ex} directly from the adjacency
matrix of this graph. However, for $k\geq 3$ the class of
$(k,1)$-hypergraphs forms a non-trivial subclass of all
$k$-hypergraphs.
For example, no pair of hyperedges can share more than one vertex in
any representable hypergraph, see \cref{fig:negative-ex-2edges}.
But this constraint alone is not enough to characterize
representable hypergraphs, see \cref{fig:negative-ex-3edges}.

\section{Characterization of Hypergraphs Representable by Point-Line Cover}\label{sec:repgraph}

For the sake of an easier presentation, we first describe the result for the special case of point-line covers, that is, for $(d,1)$-hypergraphs. We later describe how the result generalizes to higher-dimensional axis-aligned affine subspaces.

Let $G=(V,E)$ be a $k$-hypergraph with the partition $V=V_1\cup\dots\cup V_k$.
We aim to compute a point-line cover
instance $P$ (if such exists) such that each $e = (v_1, \ldots, v_k) \in E$
corresponds to some $p^e = (x^e_1, \ldots, x^e_k)\in P$, where these correspondences are induced by the isomorphism between $G$ and $G_P$.
In addition, for each $i \in [k]$, each vertex $v_i \in V_i$
corresponds to the line $\ell^{v_i}$
that is parallel to the $i$-th coordinate axis and contains $p^e$,
that is, for all $j \neq i$, we fix the $j$-th coordinate,
whereas the $i$-th coordinate is \emph{free}.
Refer to \cref{fig:example} for illustrative examples.

\begin{definition}\label{def:path}
	For any vertices $s,t \in V$, an \emph{$s$--$t$ path} is a sequence of vertices $s=v_1, \ldots, v_r=t$ such that
	$v_i$ and $v_{i+1}$ are both contained in some hyperedge $e\in E$ for all $i\in[r-1]$. Similarly, for any hyperedges $e, e' \in E$, an \emph{$e$--$e'$ path} is a $v$--$v'$ path such that $v\in e$
	and $v' \in e'$.
	
\end{definition}

The following two separability conditions based on vertex cuts are key for our characterization.

\begin{definition}[Vertex separability]
	For a given $k$-hypergraph $G$ we say that distinct vertices $v,v'\in V_i$ with $i \in [k]$ are \emph{separable} if there exists $j \in [k]$ with $j \neq i$
	such that every $v$--$v'$ path contains a vertex in $V_j$. (Informally, removing $V_j$
	from the vertex set and from the edges of $G$ separates $v$ and $v'$.) A $k$-hypergraph~$G$ is called \emph{vertex-separable} if every two distinct vertices from the same vertex part of $G$ are separable.
\end{definition}

\begin{definition}[Edge separability]\label{def:e-sep}
	For a given $k$-hypergraph $G$ we say that its distinct hyperedges $e, e' \in E$ are \emph{separable} if there exists $j \in [k]$ such that
	every $e$--$e'$~path contains a vertex in $V_j$. A $k$-hypergraph~$G$ is called
	\emph{edge-separable} if every two distinct hyperedges of $G$ are separable.
\end{definition}

Note that any pair of hyperedges sharing two or more vertices are not separable. Therefore, edge-separable $k$-hypergraphs do not contain such hyperedge pairs.

\begin{lemma}\label{lem:v-e_sep} Any vertex-separable $k$-hypergraph is edge-separable.
\end{lemma}

\begin{proof}
	Let $G$ be any vertex-separable $k$-hypergraph,
	and let $e$ and $e'$ be any distinct hyperedges of $G$.
	There are an integer $i \in [k]$ and distinct vertices
	$v, v'\in V_i$ such that $v \in e$ and $v' \in e'$.
	Since $G$ is vertex-separable, there exists $j \in [k]$
	with $j \neq i$ such that every $v$--$v'$ path contains
	a vertex in $V_j$. Let $\pi$ be any $e$--$e'$ path.
	Then $v,\pi,v'$ is a $v$--$v'$ path, so it contains
	a vertex in $V_j$, and so does $\pi$.
	Thus $G$ is edge-separable.
\end{proof}

\begin{figure}[tb]
	\centering
	\includegraphics[scale=0.9]{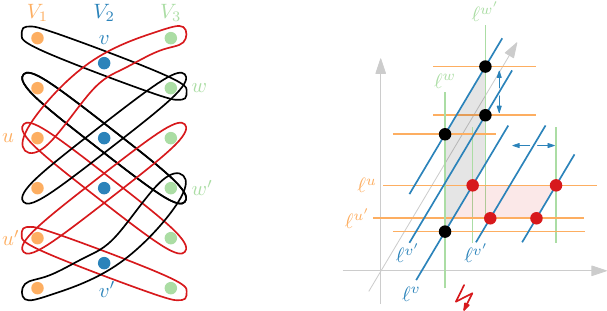}
	\caption{A hypergraph $G$ (on the left) that is edge-separable, but not vertex-separable (the vertices $v$ and $v'$ from $V_2$ are not separable). In a hypothetical representation of~$G$ (on the right), the line $\ell^{v'}$ must simultaneously intersect $\ell^{u'}$ and $\ell^{w'}$ and therefore must be equal to $\ell^{v}$, which is impossible}.
	\label{fig:separable}
\end{figure}

The converse of \cref{lem:v-e_sep} is not true,
see \cref{fig:separable}. In the instance depicted,
the two red edges containing $v, u$ and $u', v'$ or
the two black edges containing $v, w$ and $w', v'$,
for example, are separated by removing the blue vertex
part $V_2$ (which we cannot do to show that vertices $v$ and $v'$ are separable).

\begin{definition}\label{def:graph-G_i}
	Let $G=(V,E)$ be a $k$-hypergraph with the partition $V= V_1 \cup V_2  \cup \ldots \cup V_k$. For each $i\in[k]$, let $G_i = (E, E_i)$ be the graph whose distinct vertices $e$ and $e'$ are adjacent if and only if they have a common vertex in a part $V_j$ with $j \neq i$.
\end{definition}

In the following theorem we state our characterization of $k$-hypergraphs representable by point-line covers via vertex separability.

\begin{theorem}\label{thm:main} Let $k$ be any positive integer. A $k$-hypergraph $G$ is representable if and only if $G$ is vertex-separable.
\end{theorem}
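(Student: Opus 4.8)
The plan is to prove the two directions separately, with the forward direction (representable $\Rightarrow$ vertex-separable) being the easy one and the backward direction (vertex-separable $\Rightarrow$ representable) being the main work.

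For the forward direction, suppose $G$ is represented by a point set $P$ in $\mathbb{R}^k$. Each vertex $v\in V_i$ corresponds to a line $\ell^v$ parallel to the $i$-th axis, and such a line is determined by fixing its $k-1$ coordinates $x_j$ for $j\neq i$. I would show that if $v,v'\in V_i$ are distinct, then $\ell^v$ and $\ell^{v'}$ differ in at least one of these $k-1$ fixed coordinates, say coordinate $j\neq i$; I claim then that every $v$--$v'$ path must pass through $V_j$. The point is that consecutive vertices on a path lie in a common hyperedge, i.e. on a common point $p\in P$, and two lines sharing a point agree on all coordinates except possibly the two ``free'' directions of those lines. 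So if we only ever step between vertices whose parts are among $[k]\setminus\{j\}$, the $j$-th coordinate of the lines can never change along the path; this contradicts $x_j^{\ell^v}\neq x_j^{\ell^{v'}}$. Hence $v$ and $v'$ are separable, and $G$ is vertex-separable.

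For the backward direction I would construct a representation from a vertex-separable $G$. The natural coordinates to assign: for a hyperedge $e=(v_1,\dots,v_k)$, set the $i$-th coordinate $x_i^e$ to be an identifier of the connected component of $v_i$ in a suitable auxiliary graph. The auxiliary graph to use is $G_i$ from Definition~\ref{def:graph-G_i}: two hyperedges are adjacent in $G_i$ iff they share a vertex outside $V_i$. I would define $x_i^e$ to be the connected component of $e$ in $G_i$ (equivalently, an equivalence class of hyperedges). Then one must verify three things: (1) this assignment is well-defined and yields a point $p^e\in\mathbb{R}^k$ for each $e$; (2) the map $e\mapsto p^e$ is injective and the induced line $\ell^{v_i}$ for a vertex $v_i$ is well-defined, i.e. all hyperedges through $v_i$ receive the same value in every coordinate $j\neq i$ --- this is exactly saying that all hyperedges through $v_i$ lie in one component of $G_j$, which is immediate since any two hyperedges through $v_i$ share the vertex $v_i\notin V_j$ and hence are adjacent in $G_j$; (3) distinct vertices $v,v'$ in the same part $V_i$ get distinct lines, i.e. differ in some coordinate $j\neq i$ --- and this is precisely where vertex-separability is used: if $v,v'$ are separable via part $V_j$, then $v$ and $v'$ lie in different components of the graph on $V$ obtained by deleting $V_j$, which I must translate into ``the hyperedges through $v$ and the hyperedges through $v'$ lie in different components of $G_j$''. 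Finally I would check that the hyperedge-incidence structure of $G_P$ equals that of $G$: a vertex $v_i$ is on the line of $p^e$ iff $e$ and the representative hyperedge of $v_i$ agree on all coordinates $j \neq i$, and I must argue this happens iff $v_i \in e$, using edge-separability (which follows from vertex-separability by Lemma~\ref{lem:v-e_sep}) to rule out spurious incidences coming from two hyperedges that happen to agree on all coordinates but actually meet in a vertex other than $v_i$, or in no shared vertex at all.

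The main obstacle is step~(3) together with the ``no spurious incidences'' part of the last paragraph: I expect the cleanest route is to prove an equivalence of the form ``$v$ and $v'$ (in $V_i$) are separable by part $V_j$'' $\iff$ ``no hyperedge through $v$ is in the same $G_j$-component as a hyperedge through $v'$'', and separately ``$e$ and $e'$ are separable'' $\iff$ ``$e$ and $e'$ lie in different $G_j$-components for some $j$'' $\iff$ ``$p^e\neq p^{e'}$ in some coordinate or they share no vertex''. Once these dictionary lemmas between path-connectivity in $G$ (after deleting a part) and connectivity in the graphs $G_j$ are in place, both the injectivity of the representation and the correctness of the incidences follow; assembling everything and confirming $G_P\cong G$ is then bookkeeping. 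I would also note in passing that the construction runs in polynomial time, giving the recognition algorithm promised in the introduction.
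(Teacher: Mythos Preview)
Your plan is correct and matches the paper's proof almost exactly: the same auxiliary graphs $G_i$, the same coordinate assignment via connected components, the same appeal to vertex-separability for distinctness of lines and to Lemma~\ref{lem:v-e_sep} for distinctness of points, and the forward direction is the same ``the $j$-th coordinate cannot change along a path avoiding $V_j$'' argument (the paper phrases it contrapositively via the intersection $\bigcap_{j\neq i}H_j$). The one place you work harder than needed is the ``no spurious incidences'' check: rather than your proposed dictionary lemmas, the paper dispatches this with a counting argument---any point of $\mathbb{R}^k$ lies on exactly $k$ axis-parallel lines, and since the $k$ lines $\ell^{v_1},\dots,\ell^{v_k}$ through $p^e$ have just been shown to be pairwise distinct, there is simply no room for any further $\ell^w$ with $w\notin e$ to pass through $p^e$.
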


\begin{proof}
	We construct for each hyperedge $e$ a point $p^e \in \mathbb{R}^k$ and for each vertex
	$v_i \in V_i$ with $i \in [k]$ a line $\ell^{v_i} \subseteq \mathbb{R}^k$ that is parallel to
	the $x_i$-axis.
	We do this as follows. For $G$ we construct the graphs $G_i$, $i \in [k]$ defined in 
	\cref{def:graph-G_i}. For each
	graph $G_i$, $i\in [k]$ we consider the connected components of $G_i$ and assign to each connected component
	a unique (integer) value in $\{1,2,\dots,|V(G_i)|\}$.
	
	Now, if $p^e_i$ is the value of the connected component in $G_i$ that contains $e$
	then we let the point $p^e = (p^e_1, \ldots, p^e_k)$ represent the hyperedge $e$, see \cref{fig:algorithm-ex} for an example.
	
	Recall that any line parallel to the $x_i$-axis can be defined by fixing its
	\mbox{$x_j$-coordinate} for all $j \neq i$, while leaving $x_i$ free. Now, if the
	hyperedge $e = \{v_1, \ldots, v_k\}$ is represented by $p^e = (p^e_1, \ldots, p^e_k)$
	then for each $i \in [k]$, the line $\ell^{v_i}$ that represents the vertex
	$v_i$ is defined by coordinates $p^e_j$, $j \neq i$ while leaving the
	$x_i$-coordinate free, see \cref{fig:algorithm-ex}. It is important to note, that the representation
	$\ell^{v_i}$ is well-defined although $v_i$ may be contained in multiple
	hyperedges in $G$. This follows from the fact that all the hyperedges
	containing $v_i$ belong to the same connected component in $G_j$,
	$j \in [k]$, $j\neq i$ because each pair of them is joined by some edge in $G_j$
	corresponding to $v_i$ and in particular these hyperedges form a clique.
	Therefore, there is no disagreement in the $x_j$-coordinate where $j\neq i$.
	Hence, we uniquely define the coordinates that determine a line.

	$(\Leftarrow)$
	Assume that $G$ is vertex-separable. By the construction of the point-line cover instance we have:
	\begin{itemize}
		\item every point $p^e$ is in fact covered by the lines $\ell^{v_1}, \ldots, \ell^{v_k}$
		where $e = \{v_1, \ldots, v_k\}$,
		because by construction	every
		line $\ell^{v_i}$ and point $p^e$ have the same $x_j$-coordi\-nate with $j \neq i$.
		\item $\forall v\neq v'\in V$ it holds that $\ell^{v} \neq \ell^{v'}$. This is obviously true if
		vertices belong to different parts, because then the free coordinate of $v$
		is fixed for $v'$ and vice versa. If $v, v' \in V_i$ for some $i \in [k]$ then,
		by vertex separability, there exists $j \neq i$ such that $v$ and $v'$ are not
		connected in graph $G_j$ and get different $x_j$-coordinates. So they
		represent distinct lines.
		\item $\forall e\neq e'\in E$ it holds that $p^e \neq p^{e'}$. Indeed, by \cref{lem:v-e_sep},
		$G$ 
		is edge-separable and by definition of edge separability (see~\cref{def:e-sep}) distinct hyperedges
		are not connected in at least one graph $G_i$ and get different $x_i$-coordinates.
		So they represent distinct points.
	\end{itemize}
	
	\begin{figure}[tb]
		\centering
		\includegraphics[scale=0.9]{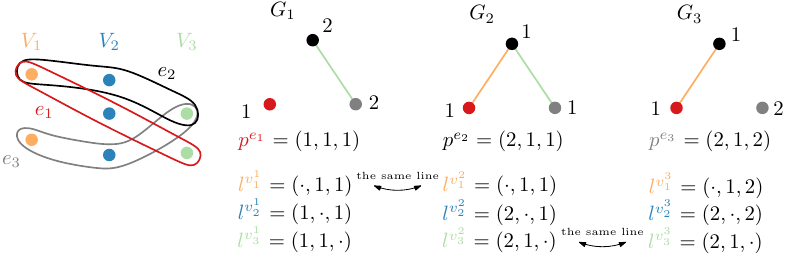}
		\caption{A hypergraph $G$, the graphs $G_1, G_2, G_3$, and the coordinates of the points and lines corresponding to the hyperedges and vertices. The dots placed instead of coordinates mean that those coordinates are free.}
		\label{fig:algorithm-ex}
	\end{figure}
	
	By the above construction, for every incident vertex-hyperedge pair $v\in V$, $e\in E$, that is, $v\in e$, the corresponding geometric objects $\ell^v$ and $p^e$ are incident as well. We claim that if $v$ and $e$ are not incident, that is, $v\notin e$ then $\ell^v$ and $p^e$ are not incident as well. This is because every point $p^e$ is already incident to precisely $k$ lines $\ell^v$ by construction, because the lines $\ell^v$ are pairwise distinct, and because $p^e$ cannot be incident on more than $k$ axis-parallel lines.  Thus, we have constructed  a point-line cover instance that represents the hypergraph $G$
	and this means that $G$ is representable.

	$(\Rightarrow)$
	Assume that $G$ is not vertex-separable but that it has a point-line cover representation.
	This means that it contains at least two distinct vertices $v$ and $v'$ from the
	same part $V_i$ that are not separable. Then for each part $V_j$ with $j \neq i$,
	there exists 
	a $v$--$v'$ path $v=v_1,\dots,v_r=v'$ such that $v_t\notin V_j$ for each
	$t\in[r]$. All lines $\ell^{v_t}$ with $t\in[r]$ that represent the vertices $v_1, \ldots, v_r$
	lie on the same hyperplane $H_j$ perpendicular to the $x_j$-axis. This is because
	successive line pairs are joined by a common point (representing the hyperedge
	containing both) and since none of these lines is parallel to the $x_j$-axis and
	so the $x_j$-coordinate stays fixed. Since this holds for all $j\in[k]$, $j\neq i$,
	the lines $\ell^v$ and $\ell^{v'}$ lie in the intersection $\bigcap\limits_{j \neq i} H_j$.
	But the intersection of such hyperplanes is a single line. This contradicts that
	$v$ and $v'$ correspond to the distinct lines.	
\end{proof}

Note that our characterization in \cref{thm:main} directly gives rise to an efficient recognition algorithm to check whether or not a given $k$-hypergraph is representable. We check for every vertex pair from the same part whether they are separable.

Below we give an algorithm to actually compute a representation for a given $k$-hypergraph (or output that a representation does not exist). The algorithm is an algorithmic implementation of the construction used in the proof of \cref{thm:main} to recover a representation from a given (representable) $k$-hypergraph.

\paragraph*{Computing a representation.}

For each $i \in [k]$ we compute a graph $G_i$ as in \cref{def:graph-G_i}. We use the construction from the proof of \cref{thm:main} to assign the coordinates for all points and lines (using connected components of $G_i$). In particular, we uniquely number the connected components of each graph $G_i$, $i\in[k]$. Then, for any hyperedge $e$, we construct the point $p^e$ whose $i$-th coordinate is the number of the connected components containing $e$ in $G_i$. Based on the representation of the hyperedges we can easily obtain the representation of the vertices as lines as well.
To verify if the resulting candidate representation is in fact valid, we have to check if any two points $p^e,p^{e'}$ representing distinct hyperedges $e,e'$ are in fact distinct, that is, differ in at least one coordinate. Similarly, we have to check distinctness of the lines.
If this is the case then the hypergraph $G$ is representable and we get the point-line cover instance corresponding to $G$.

The construction of a point-line cover instance for a given hypergraph can be done in $\mathcal{O}(k \cdot m^2)$, where $m$ is the number of hyperedges. Checking distinctness of the vertex and edge representations takes $\mathcal{O}(k \cdot (m\log m+ n\log n))$ using lexicographic sorting, where $n$ is
the number of vertices. In total, the runtime of the algorithm is $\mathcal{O}(k \cdot (m^2+n\log n))$.

\section{Generalization to Point-Subspace Cover}

In this section, we generalize the previous result for $(d,1)$-hypergraphs to the general case of $(d,\ell)$-hypergraphs. That is, we want to characterize $k$-hypergraphs (where $k={\binom{d}{\ell}}$) that are
representable as sets of points in $\mathbb{R}^d$ covered by \emph{$\ell$-dimensional axis-aligned subspaces} where $1 \leq \ell \leq d-1$ and $d \geq 2$. 
Such an axis-aligned affine subspace $L$ in $\mathbb{R}^d$ is defined by fixing $d-\ell$ coordinates (called \emph{fixed coordinates}) to specific values and allowing $\ell$ coordinates (called \emph{free coordinates}) to vary freely. Formally, there exists a point $(x_1, \dots, x_d) \in \mathbb{R}^d$ and an index set $I \subseteq [d]$ with $|I| = \ell$ such that  
\[
L = \{(y_1, \dots, y_d) \in \mathbb{R}^d \mid y_i = x_i \text{ for all } i \notin I\}.
\]  
In this formulation, the coordinates in $I$ define the $\ell$-dimensional degrees of freedom, while the remaining coordinates are fixed to the values of $x_i$.

Note that in the case $\ell = d-1$ \emph{every} $k$-hypergraph can be represented analogously to the case of bipartite graphs, each of which is representable in two-dimensional space. This follows immediately from our characterization below but can also be seen in a direct way analogously to the two-dimensional setting.

For each positive integer $\ell\leq d-1$, each $(d,\ell)$-hypergraph is a $k$-hypergraph, for $k={\binom{d}{\ell}}$.
Every part of the vertex partition in the hypergraph represents a set of $\ell$-subspaces that have the same free coordinates.
In the following we assume that the \emph{labeling} of the parts of the hypergraph is given. That is, for each part of the hypergraph, we know which $\ell$-subset $I$ of free coordinates this part corresponds; we write $V_I$ to denote this correspondence. If the labeling is not given, we guess the correct labeling in a brute-force fashion by checking all $\mathcal{O}(d^{\ell d^\ell})$ many labelings whether it satisfies the characterization for fixed labeling as described below.

The following definitions and proofs are natural generalizations of these for $(d,1)$-hypergraphs. In the case of $\ell=1$ the idea of separability is related to cutting the part $V_j$ of the graph corresponding to some coordinate $j\in[k]$. The key idea in the generalized setting is to cut instead all parts $V_I$ that contain a specific coordinate $j$. Below, we give the full definitions and proofs, which are along the lines of $\ell=1$ but rather use the generalized concept of cuts and separability.

\paragraph{Generalization of the vertex separability property.}

Let $\ell$ and $d$ with $\ell\le d-1$ be any positive integers and let ${\binom{[d]}{\ell}}$ be the family of subsets of $[d]$ of cardinality~$\ell$.
Let $k=\left|{\binom{[d]}{\ell}}\right|={\binom{d}{\ell}}$.
Let $G =(V, E)$ be a $k$-hypergraph and let $V=\bigcup_{I\in {\binom{[d]}{\ell}}}V_I$ be the partition of $V$. For each $i\in [d]$, let $\widehat V_i=\bigcup\{V_I \mid I\in {\binom{[d]}{\ell}},\, I\ni i\}$, i.e., a set of parts of $V$ that share the free coordinate $i$.

Now we generalize the vertex separability property using the definition of a path (see \cref{def:path} from \cref{sec:repgraph}).

\begin{definition}[Vertex separability]
	Let $I\in {\binom{[d]}{\ell}}$ be any set. For a given $\binom{d}{\ell}$-hypergraph $G$, two distinct vertices $v$ and $v'\in V_I$
	are \emph{separable} if there exists $j \in [d]\setminus I$ such that every $v$--$v'$
	path contains a vertex in $\widehat V_j$.
	(Informally, removing $\widehat V_j$
	from the vertex set and from the hyperedges of $G$ separates $v$ and $v'$.) A hypergraph $G$ is called
	\emph{vertex-separable} if every two distinct vertices from the same vertex part of $G$ are separable.
\end{definition}

In an analogous way we can generalize edge separability and \cref{lem:v-e_sep} that vertex separability implies edge separability.

\begin{definition}[Edge separability]\label{def:e-sep-general}
	For a given $\binom{d}{\ell}$-hypergraph $G$, two distinct hyperedges $e$ and $e'$ of $G$ are
	\emph{separable} if there exists $j \in [d]$ such that
	every $e$--$e'$~path contains a vertex in $\widehat V_j$. A hypergraph $G$ is called
	\emph{edge-separable} if every two distinct hyperedges of $G$ are separable.
\end{definition}

\begin{lemma}\label{lem:v-e_sep-general} Any vertex-separable $\binom{d}{\ell}$-hypergraph is edge-separable.
\end{lemma}
\begin{proof}
	Assume that a given $\binom{d}{\ell}$-hypergraph $G$ is not edge-separable. This means that there are two distinct
	hyperedges $e$ and $e'$ that are not separable. Then $\forall j \in [d]$ there is an $e$--$e'$ path that
	does not contain a vertex from $\widehat V_j$. Because $e$ and $e'$ are distinct, there are distinct vertices
	$v$ and $v'$ with $v \in e$ and $v' \in e'$ from the same part $V_I$ for some $I \in {\binom{[d]}{\ell}}$.
	Now, for each $j \in [d]\setminus I$, there exists an $e$--$e'$ path $\pi_j$ that does not contain a
	vertex from $\widehat V_j$. But then $v,\pi_j,v'$ forms a $v$--$v'$ path that does not contain a vertex from $\widehat V_j$.
	This means that $G$ is not vertex-separable.	
\end{proof}

As was already mentioned in the previous section, the converse is not true, see \cref{fig:separable}.

\begin{definition}\label{def:graph-G_i-general}
	Let $G=(V,E)$ be a $\binom{d}{\ell}$-hypergraph with the partition $V=\bigcup_{I\in {\binom{[d]}{\ell}}} V_I$. For each $i\in[d]$, let $G_i = (E, E_i)$ be the graph whose distinct vertices $e$ and $e'$ are adjacent if and only if they have a common vertex in $V\setminus \widehat V_i$.
\end{definition}

\begin{theorem}
	Let $\ell$ and $d$ with $\ell \leq d-1$ be any positive integers. A $\binom{d}{\ell}$-hypergraph $G$ is representable if and
	only if $G$ is vertex-separable.
\end{theorem}

\begin{proof}
	The proof is a natural generalization of the proof of \cref{thm:main}.
	
	We construct for each hyperedge $e$ a point $p^e \in \mathbb{R}^d$ and for each vertex
	$v_I \in V_I$ with $I \in {\binom{[d]}{\ell}}$ a subspace $\ell^{v_I} \in \mathbb{R}^d$
	that is parallel to all $x_i$-axis where $i \in I$.
	We do this as follows. For $G$, we construct the graphs $G_i$ with $i \in [d]$ defined in \cref{def:graph-G_i-general}. For each
	graph $G_i$ we consider the connected components of the graph and assign to each of
	them a unique (integer) value.
	
	Now, if $p^e_i$ is the value of the connected component in $G_i$ that contains $e$
	then we let the point $p^e = (p^e_1, \ldots, p^e_d)$ 
	represent the hyperedge $e$, see \cref{fig:algorithm-ex} for an example.
	
	Recall that any subspace parallel to all $x_i$-axis where $i \in I$ can be defined by fixing its
	$x_j$-coordinate for all $j \neq i$, while leaving $x_i$-coordinates free. Now, if the hyperedge $e = \{v_I\}$ for all $I \in {\binom{[d]}{\ell}}$ is represented by $p^e = (p^e_1, \ldots, p^e_d)$
	then for each $I \in {\binom{[d]}{\ell}}$, the subspace $\ell^{v_I}$ that represents the vertex
	$v_I$ is defined by coordinates $p^e_j$, $j \notin I$ while leaving the
	$x_i$-coordinates free. It is important to note, that the representation
	$\ell^{v_I}$ is well-defined
	although $v_I$ may be contained in multiple
	hyperedges in $G$. This follows from the fact that all the hyperedges
	containing $v_I$ belong to the same connected component
	in $G_j$ with $j \in [d]$ because each pair of them is
	joined by some edge in $G_j$ corresponding to $v_I$ and
	in particular these hyperedges form a clique.
	Therefore, there is no disagreement in the $x_j$-coordinate where $j\neq i$ for all $i \in I$.
	Hence, we uniquely define the coordinates that determine a subspace.
	
	$(\Leftarrow)$
	Assume that $G$ is vertex-separable. By the construction of the point-subspace cover instance we have:
	\begin{itemize}
		\item every point $p^e$ is in fact covered by the lines $\ell^{v_I}$ 
		where $e = \{v_I\}$ for all $I \in {\binom{[d]}{\ell}}$.
		This is because by construction every line $\ell^{v_I}$ and point $p^e$ have
		the same $x_j$-coordinate with $j \notin I$.
		\item $\forall v\neq v'\in V$ it holds that 
		$\ell^{v} \neq \ell^{v'}$. This is obviously true if
		vertices belong to different parts, because then there exists at least one free
		coordinate of $v$ that is fixed for $v'$ and vice versa.
		If $v, v' \in V_I$ for some $I \in {\binom{[d]}{\ell}}$ then,
		by vertex separability, there exists $j \notin I$ such that $v$ and $v'$ are not
		connected in graph $G_j$ and get different $x_j$-coordinates. So they
		represent distinct subspaces.
		\item $\forall e\neq e'\in E$ it holds that $p^e \neq p^{e'}$. Indeed, by \cref{lem:v-e_sep-general},
		$G$ is edge-separable and by the definition of edge separability (see~\cref{def:e-sep-general}) distinct hyperedges
		are not connected in at least one graph $G_i$ and get different $x_i$-coordinates.
		So they represent distinct points.
	\end{itemize}
	
	By the above construction, for every incident vertex-hyperedge pair $v\in V$, $e\in E$, that is, $v\in e$, the corresponding geometric objects $\ell^v$ and $p^e$ are incident as well. We claim that if $v$ and $e$ are not incident, that is, $v\notin e$ then $\ell^v$ and $p^e$ are not incident as well. This is because every point $p^e$ is already incident to precisely $\binom{d}{\ell}$ subspaces $\ell^v$ by construction, because the subspaces $\ell^v$ are pairwise distinct, and because $p^e$ cannot be incident to more than $\binom{d}{\ell}$ axis-parallel subspaces.
	Thus, we have constructed a point-subspace cover instance that represents the hypergraph $G$ and this means that $G$ is representable.

	$(\Rightarrow)$
	Assume that $G$ is not vertex-separable but that it has a point-subspace cover representation.
	This means that it contains at least two distinct vertices $v$ and $v'$ from the
	same part $V_I$ that are not separable. Then, for each coordinate $j \in[d] \setminus I$,
	there exists a $v$-$v'$~path such that none of the vertices on this path has $j$ as a free coordinate. All subspaces that represent the vertices from the path lie in the same hyperplane $H_j$ perpendicular to the $x_j$-axis,
	i.e., $x_j$-coordinate is fixed and all other coordinates are free. This is because
	successive subspace pairs are joined by a common point (representing the hyperedge
	containing both successive vertices from the path). Since none of these subspaces is parallel to the $x_j$-axis, the $x_j$-coordinate stays fixed. Since this holds for all $j\in[d] \setminus I$, the subspaces $\ell^v$ and $\ell^{v'}$ lie in the intersection
	$\bigcap\limits_{j \notin I}{} H_j$. But the intersection of such hyperplanes
	is a single subspace that has all coordinates $i \in I$ free. This contradicts that
	$v$ and $v'$ correspond to the distinct subspaces.
\end{proof}

Analogously to the special case of $(d, 1)$-hypergraphs the above proof leads directly to an algorithm computing a representation (or notifying about non-existence). A straightforward implementation of this algorithm gives a running time of $\mathcal{O}(d^{\ell d^\ell+2}(m^2+n\log n))$ where the exponential factor in $d$ comes from guessing the labeling of the parts. It is a very interesting question if the exponential dependence on $d$ can be removed to get a polynomial-time algorithm also for non-constant $d$.

\section{Conclusion and Outlook}

There is a large amount of literature in algorithms and graph theory on hypergraph problems. 	This motivates various future research directions.
\begin{enumerate}
	\item Can the structure of $(d,\ell)$-hypergraphs be leveraged in (optimization) problems for hypergraphs such as matching or vertex cover?
	\item What is the relation of the class of $(d,\ell)$-hypergraphs to other classes of hypergraphs (for example geometrically representable hypergraphs)?
\end{enumerate}
For example, maximum matching in $k$-hypergraphs is a very well-studied problem still exhibiting large gaps in our current understanding~\citep{cygan:k-dim-matching,hazan-etal06:k-set-packing}. Another example is hypergraph vertex cover, for which tight approximability results are known
for general $k$-hypergraphs, as shown by~\cite{guruswami10-hypergraph-vc}. While the problem has been considered for $(k,1)$-hypergraphs before by~\cite{gaur2007covering} (from the geometric perspective of point-line covering), no improvement upon the general case is known. We hope that our structural characterization can help to obtain such improvements.
In the following, we state some first motivating results opening up these lines of research. Namely, we show that matching on $(3,1)$-hypergraphs is NP-hard and consider the relation of $(d,1)$-hypergraphs to point-line cover representations on the plane where we drop the requirement of axis-alignment. We hope that our structural characterization helps make progress on these questions.

\subsection{Complexity of Matching on $(3,1)$-Hypergraphs}
Finding maximum matchings in hypergraphs and in $k$-hypergraphs in particular is a classic optimization problem. The latter problem is also known as \emph{$k$-dimensional matching}. This is a classic NP-hard problem, and thus its approximation algorithms have been studied. The currently best approximation algorithm for $k$-dimensional matching belongs to \cite{cygan:k-dim-matching} and has the ratio $(k+1+\epsilon)/3$ for an arbitrarily small constant $\epsilon>0$. Despite the problem is very well-studied, there is still a big gap to the best known lower bound of $\Omega(k/\log k)$ on the approximability by \cite{hazan-etal06:k-set-packing}.

It is an interesting question if improved upper bounds can be obtained for $(d,\ell)$-hypergraphs. As a first motivating result, we show in the following that indeed matching on $(k,1)$-hypergraphs is NP-hard for any $k\geq 3$. (For $k=2$ the problem is matching on bipartite graphs, and thus its optimum can be found in polynomial time.)

Below we consider the matching problem for $(3,1)$-hypergraphs. Specifically, we are
given a 3-uniform 3-partite vertex-separable hypergraph, i.e.,
a \mbox{3-hypergraph} that can be represented as a
point-line cover instance in $\mathbb R^3$. The aim is to find a largest
\emph{3-dimensional matching}, that is, a set of hyperedges of the hypergraph such that no two hyperedges share a common vertex.
We call the problem \textsc{Matching on $(3,1)$-Hypergraphs}.

The problem can be translated to point-line cover terms as follows.
Given a finite set of points in $\mathbb{R}^3$, the aim is to find a
largest subset of points, such that no two lie on the same axis-parallel line.

In the following, we show NP-hardness of this problem by reduction from the decision version of
\textsc{Maximum Independent Set}, which we describe next. Given a graph $G = (V, E)$ with $|V| = n$,
the task is to decide whether there exists an \emph{independent set}
of size at least $r$, that is, a subset of vertices $I \subseteq V$
with $|I| \geq r$, such that no two vertices from $I$ are adjacent.
We denote by $\alpha(G)$ the size of a largest independent set of the graph $G$.

\begin{theorem}
	The decision version of \textsc{Matching on $(3,1)$-Hypergraphs} is NP-hard.
\end{theorem}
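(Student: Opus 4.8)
The plan is to reduce from the \emph{independent set} problem. Given a graph $G=(V,E)$ I will construct in polynomial time a point set $P=P(G)\subseteq\mathbb{R}^3$ and an integer $c_G$ depending only on $G$ such that the largest subset of $P$ with no two points on a common axis-parallel line has size exactly $c_G+\alpha(G)$. Since $P(G)$ is literally a point set it is by construction a $(3,1)$-hypergraph (in particular vertex-separable, cf.\ Theorem~\ref{thm:main}) with the obvious labelling of its three parts by the coordinate axes, so computing a maximum matching on it is an instance of the problem in question and NP-hardness follows: $G$ has an independent set of size $\ge\ell$ iff $P(G)$ has a matching of size $\ge c_G+\ell$. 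Throughout I use that two points of $\mathbb{R}^3$ lie on a common axis-parallel line exactly when they agree in at least two coordinates; call such a pair \emph{conflicting}, so that a matching is exactly an independent set of the \emph{conflict graph} of $P$.

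The construction has two kinds of local gadgets, assigned pairwise disjoint ranges of ``fresh'' coordinate values so that points of different gadgets never agree in two coordinates. For each $v\in V$, a small staircase-like configuration of points inside one coordinate plane realises a conflict graph that is an even cycle $C_{2\deg(v)}$ plus one extra \emph{reward} point $a_v$ conflicting with exactly one cycle point, chosen to be in the out-state below. An even cycle has exactly two maximum independent sets, the two alternating classes, which I call the \emph{in}- and \emph{out}-states, and $a_v$ is thus addable precisely alongside the in-state; so this gadget contributes $\deg(v)$ points in the out-state and $\deg(v)+1$ in the in-state, the latter being the unique local optimum. The in-state contains $\deg(v)$ distinguished \emph{hooks}, one reserved for each edge at $v$. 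For each edge $e=\{u,v\}$ I add a short induced path of fresh internal points joining $e$'s hook in $u$'s gadget to $e$'s hook in $v$'s gadget; such a path is built by changing one coordinate per step, and I pick its (even) length so that its internal points contribute the same number $\beta$ of points to any matching unless \emph{both} of those hooks are used, in which case they contribute $\beta-1$.

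For correctness, fix $S\subseteq V$ and consider the matching that puts every $v\in S$ in the in-state, every $v\notin S$ in the out-state, and as many internal edge-gadget points as possible given these choices. Summing the contributions, its size is $(2+\beta)|E|+|S|-e_G(S)$, where $e_G(S)$ is the number of edges of $G$ with both endpoints in $S$. A one-line argument shows $\max_{S\subseteq V}\bigl(|S|-e_G(S)\bigr)=\alpha(G)$: if $G[S]$ has $k$ components then $e_G(S)\ge|S|-k$, so $|S|-e_G(S)\le k$, and one vertex per component is an independent set, so $k\le\alpha(G)$; equality holds when $S$ is a maximum independent set. Hence with $c_G:=(2+\beta)|E|$ some matching has size $c_G+\alpha(G)$. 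Conversely, for an arbitrary matching $M$ its intersection with each vertex gadget has at most $\deg(v)+1$ points, with equality only in the in-state-with-reward configuration; letting $S$ be the set of vertices attaining equality, $M$ uses every hook of every $v\in S$, hence loses a point in every edge gadget with $e\subseteq S$, and the same counting gives $|M|\le c_G+\alpha(G)$. Therefore the maximum matching has size exactly $c_G+\alpha(G)$, completing the reduction.

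The combinatorial content above is short; the main obstacle I expect is the \emph{geometric realisation and coordinate bookkeeping}. Since a conflict requires agreement in two of only three coordinates, the gadgets have very little slack, and one must choose coordinates so that simultaneously: each vertex gadget's conflict graph is exactly the prescribed even cycle together with its single reward edge; each edge-gadget path is an induced path that nonetheless manages to ``travel'' from one vertex's coordinate region to another's by altering one coordinate at a time; and no two points belonging to different gadgets agree in two coordinates. Verifying these non-interference properties, tuning the parity of the edge-gadget paths so that ``both endpoints in'' costs precisely one point, and confirming that the local optima of the gadgets are exactly as claimed, is where the real work of the proof lies.
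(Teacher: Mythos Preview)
Your reduction is essentially the same as the paper's: reduce from \textsc{Independent Set}, build a vertex gadget with two ``states'' whose maximum matchings differ by exactly one, and an edge gadget that is an odd-length path of points so that using both endpoints costs one internal point. The combinatorial analysis you give (including the identity $\max_S(|S|-e_G(S))=\alpha(G)$) is correct, and your diagnosis that the real work is the coordinate bookkeeping is exactly right---that is where the paper spends its effort.

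The differences are cosmetic but worth noting. The paper's vertex gadget is simpler than yours: instead of a cycle $C_{2\deg(v)}$ with a pendant, it uses a plain odd path $P_{2n-3}$ laid out as a staircase in a private $z$-plane, whose unique maximum independent set $A(v)$ of size $n-1$ plays the role of your in-state and whose alternating set $B(v)$ of size $n-2$ is the out-state. Using $n-1$ hooks regardless of degree (rather than $\deg(v)$) wastes space but avoids degenerate cases for small degree and makes the target value a clean $n^2-2n+\ell+2m$. The paper's edge gadget is a concrete five-segment axis-parallel path with four inner points linking an $A$-point of one vertex gadget to an $A$-point of another; the parity is fixed rather than parametrised by your $\beta$. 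For the converse direction, the paper normalises an arbitrary matching (replacing mixed $A/B$ choices by $B(v)$, then swapping an $A$-end for a second inner point on any edge with both ends in $A$) before reading off the independent set, whereas you bound $|M|$ directly; both arguments are fine.

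One small wrinkle in your plan: since every $x$- and $y$-value on your cycle is shared by two cycle points, a pendant placed in the same $z$-plane would acquire two neighbours, so you must put $a_v$ at a different $z$ (sharing $x$ and $y$ with a single out-state point). You will hit the analogous issue when attaching the first internal edge-gadget point to a hook. None of this is fatal, but it confirms that the paper's path-based gadget is a little easier to lay out.
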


\begin{proof}
	Given a graph $G = (V, E)$ with $n$ vertices and $m$ edges, we construct a $(3,1)$-hypergraph $H(G)$ in
	polynomial time such that, for any integer $r \ge 0$, $H(G)$ has a
	matching of size at least $n^2 - 2n + r + 2m$ if
	and only if $G$ has an independent set of size at least $r$.
	
	Below, we create such an instance where we represent vertices by axis-parallel lines and hyperedges by points.
	First we introduce a vertex gadget. 
	For every vertex $v \in V$, we create a set of $2(n-1)$
	axis-parallel lines $\ell_1(v), \dots, \ell_{2n-2}(v)$
	that lie on the same plane perpendicular to the $z$-axis. We require that, for each $i \in [2n-2]$,
	the line $\ell_i(v)$ is parallel to the $x$-axis, if $i$ is odd,
	and to the $y$-axis, if $i$ is even, see \cref{fig:3dm-vertex-gadget}. 
	Moreover, for  each odd (even) $i \in [2n-3]$, the line $\ell_i(v)$ has smaller fixed
	$y$-coordinate ($x$-coordinate) than the line $\ell_{i+2}(v)$.
	For each $i \in [2n-3]$, we introduce a point at
	the intersection of the lines $\ell_i(v)$ and $\ell_{i+1}(v)$. 
	We call the set of these $2n-3$ points the \emph{vertex path} of $v$
	and denote it by $P(v)$. 
	We say that the point of the vertex path that lies at the intersection of lines
	$\ell_i(v)$ and $\ell_{i+1}(v)$, for $i \in [2n-3]$, is \emph{of type A} if
	$i$ is odd, and \emph{of type B}, otherwise.
	Let $A(v)\subseteq P(v)$  be the set of points of type A (see the blue disks at \cref{fig:3dm-vertex-gadget})
	and let $B(v)\subseteq P(v)$ be the set of points of type B (see the green squares at 
	\cref{fig:3dm-vertex-gadget}). Note that the points of $A(v)$ and the points
	of $B(v)$ alternate in $P(v)$, $|A(v)| = n-1$, and $|B(v)| = n-2$.
	Hence, both $A(v)$ and $B(v)$ are matchings in the set $P(v)=A(v)\cup B(v)$,
	and since $|A(v)| > |B(v)|$ it holds that $A(v)$ is the unique maximum matching in $P(v)$.

	\begin{figure}[tb]
		\begin{center}
			\subfigure[A vertex gadget contains ${n-1}$~points of type A (blue disks) and $n-2$ points of type B (green squares).\label{fig:3dm-vertex-gadget}]{
				\includegraphics[scale=0.95]{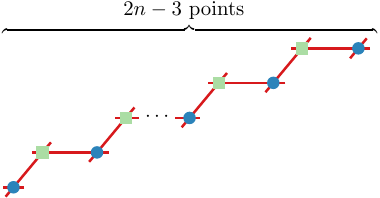}
			}
			\hfill
			\subfigure[An edge gadget (black) connecting two vertex gadgets (red).\label{fig:3dm-edge-gadget}]{
				\includegraphics[scale=0.95]{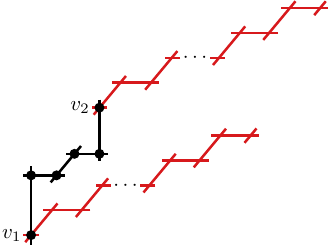}
			}
			\caption{A vertex~\subref{fig:3dm-vertex-gadget} and an edge~\subref{fig:3dm-edge-gadget} gadgets.}
		\end{center}	
	\end{figure}
	
	Now we introduce an edge gadget.  For each edge
	$e = (u, v) \in E$ we create a set of five axis-parallel lines
	$\ell_1(e), \dots, \ell_5(e)$ connecting a point in $A(u)$ with a point in $A(v)$, such that $\ell_1(e)$ and $\ell_5(e)$ are parallel
	to the $z$-axis, $\ell_2(e)$ and $\ell_4(e)$ are parallel to the $x$-axis, and
	$\ell_3(e)$ is parallel to the $y$-axis, see
	\cref{fig:3dm-edge-gadget}.
	Moreover, two consecutive lines cross and the first and the last lines contain
	a point of $A(u)$ and a point of $A(v)$, respectively.
	For each $i \in [4]$, we introduce a point at the intersection of the lines $\ell_i(e)$ and $\ell_{i+1}(e)$. These four points (which we call \emph{inner} points)
	together with the two points from $A(u)$ and $A(v)$ (which we call \emph{endpoints})
	create an \emph{edge path} $P(e)$ of $e$. For each edge path
	we pick different endpoints, so that the
	edge paths are pairwise disjoint. This is possible because every vertex
	$v$ has $|A(v)|=n-1$ points of type A on its vertex path and is incident
	to at most $n-1$ edges. Note that any matching of an edge
	gadget contains at most two inner points. Moreover, a matching can
	contain two inner points only if at most one of the endpoints
	is in this matching.

	For an example of the construction of a matching instance from
	a given graph, see \cref{fig:3dm-example}. We construct the instance in such a way
	that no two lines introduced above coincide and so that every vertex path lies on its own plane perpendicular
	to the $z$-axis.
	
	\begin{figure}[tb]
		\begin{center}
			\subfigure[\label{fig:3dm-example-graph}]{
				\includegraphics{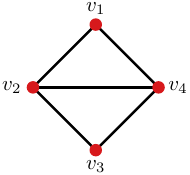}
			}
			\hfil
			\subfigure[\label{fig:3dm-example-gadget}]{
				\includegraphics{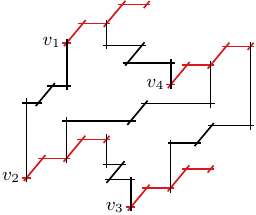}
			}
			\caption{Illustration of the reduction: a graph (on the left) and a corresponding point-line cover instance (on the right). The input points lie at the intersections of the lines drawn at the figure.}
			\label{fig:3dm-example}
		\end{center}
	\end{figure}

	It remains to prove that the instance $H(G)$ has a matching
	$M$ of size at least $t = n^2 - 2n + r + 2m$
	if and only if $G$ has an independent set $I$ of size at least $r$, for any integer $r \ge 0$.
	
	Assume that there is an independent set $I$ of $G$ of size
	$r'\ge r$. We construct a matching $M$ of size at least
	$t$ as follows. For each vertex $v$ of $G$, if $v \in I$, we add the set $A(v)$ to $M$; otherwise, we add the set $B(v)$ to $M$.  This ensures that,
	for any edge path corresponding to an edge $(u,v)$ of $G$, 
	at most one of its end points will be included in $M$, because at most one of the vertices $u,v$ is in
	$I$. This allows us to add two inner points of each edge path
	to $M$. Thus,
	\[
	|M| = r' \cdot (n-1) + (n-r') \cdot (n-2) + 2 m= n^2 - 2n + r' + 2m \ge t.
	\]
	
	Now, assume conversely that $H(G)$ has a matching $M$ of size at
	least $t$. We first modify $M$ such that it remains a valid matching
	of size at least $t$ but has a structure corresponding to an independent set in $G$ as described above. Consider vertex paths of~$H(G)$. 
	Observe that if, for a vertex $v$ of $G$, the matching $M$ contains points both of $A(v)$ and $B(v)$, then $M\cap P(v)$ has size at most
	$n-2$. In this case, we change the matching on $P(v)$ to $B(v)$ without decreasing its size.
	Thus, assume that $M\cap P(v)=B(v)$. Using this assumption,
	we can ensure that every vertex path has either only
	points of type A or only points of type B in $M$.
	Assume that there is an edge path $P((u,v))$ corresponding to some edge
	$(u,v)\in E$ where both endpoints are in $M$. This implies
	$M\cap P(u)=A(u)$ and $M\cap P(v)=A(v)$. Then $P((u,v))$
	contains only \emph{one} inner point that is in $M$. In this case,
	we change the matching $M$ so that $M\cap P(u)=B(u)$ and so
	that $P((u,v))$ contains \emph{two} inner points in $M$ without
	decreasing the size of $M$ overall. Hence, we can assume that,
	for any edge $(u,v)\in E$, we do not have $M\cap P(u)=A(u)$ and
	$M\cap P(v)=A(v)$ simultaneously. We can also assume that any
	edge path contains two inner points in $M$.
	
	Let $I(M)$ be the set of vertices $v\in V$ where $P(v)\cap M=A(v)$.
	Note that $I(M)$ is an independent set by our
	assumption above. Since any edge path contains two inner points in $M$, the
	total size of $M$ is 
	$|I(M)|(n-1)+(n-2)(n-|I(M)|)+2m$. This quantity is at least
	$t$ only if $|I(M)|\geq r$, which completes the proof.
\end{proof}

\subsection{Plane Many-Directions Representations}
To address the second research direction mentioned above, we give a first result relating the class of $(d,1)$-hypergraphs to other geometrically represented hypergraphs. Here we consider in particular point-line cover representations on the plane where we drop the requirement of axis-alignment.

\paragraph*{Obtaining a many-directions representation for a $(d,1)$-hypergraph.}

Assume we are given an integer $d \ge 0$ and a $(d,1)$-hypergraph with an axis-aligned point-line cover 
representation $P$ in $\mathbb{R}^d$.
We create an equivalent point-line cover representation on the plane with $d$ distinct directions.
To this end, we create a random $2\times d$ matrix $M$, where each entry is picked
uniformly at random from the interval $[0,1]$. Note that, for any two distinct vectors
$x,y\in\mathbb{R}^d$, the probability that $Mx=My$ is $0$. Hence, with probability $1$,
the set $P'=\{\,Mp\mid p\in P\,\}\subseteq\mathbb{R}^2$ has cardinality $|P|$. By the
same reasoning, with probability  $1$, any two non-parallel lines in $\mathbb{R}^d$ stay non-parallel under
the mapping induced by $M$. Thus, the axis-parallel lines in $\mathbb{R}^d$ covering
the points in $P$ are transformed into a set of distinct lines in the plane using
precisely $d$ distinct directions.

\begin{figure}[tb]
	\centering
	\includegraphics{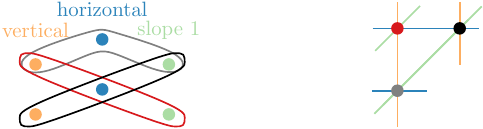}
	\caption{A not vertex-separable hypergraph and its representation on the plane with three directions.}
	\label{fig:3-directions}
\end{figure}

The converse does not hold. For an example of a hypergraph that is not vertex-separable (thus, does not have a representation in $\mathbb R^3$), but is representable on the plane with three directions, see \cref{fig:3-directions}.

Observe that the plane many-directions case is sensitive to the directions we allow. In the configuration in \cref{fig:k4} there are four points and six directions of lines, namely, vertical, horizontal, and with the slopes $2$, $-2$, $1/2$, and $-1/2$. However, it is not hard to verify that it is not possible to represent an isomorphic hypergraph with the directions vertical, horizontal, and with the slopes $2$, $-2$, $1/2$, and $1/3$.

\begin{figure}[tbh]
	\centering
	\includegraphics{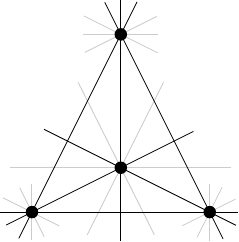}
	\caption{An example of a many-directions representation with four points and six directions: vertical, horizontal, and slopes 2, -2, 1/2, -1/2.}
	\label{fig:k4}
\end{figure}

	\bibliographystyle{plainnat}
	\bibliography{abbrv, ref_point_line}

\begin{thebibliography}{18}
\providecommand{\natexlab}[1]{#1}
\providecommand{\url}[1]{\texttt{#1}}
\expandafter\ifx\csname urlstyle\endcsname\relax
  \providecommand{\doi}[1]{doi: #1}\else
  \providecommand{\doi}{doi: \begingroup \urlstyle{rm}\Url}\fi

\bibitem[Aharoni et~al.(1996)Aharoni, Holzman, and Krivelevich]{ahk-tlcph-C96}
Ron Aharoni, Ron Holzman, and Michael Krivelevich.
\newblock On a theorem of lov{\'a}sz on covers in $\tau$-partite hypergraphs.
\newblock \emph{Combinatorica}, 16\penalty0 (2):\penalty0 149--174, 1996.
\newblock URL \url{https://doi.org/10.1007/BF01844843}.

\bibitem[Br{\"o}nnimann and Goodrich(1995)]{bronnimann95-set-covers}
Herv{\'e} Br{\"o}nnimann and Michael~T. Goodrich.
\newblock Almost optimal set covers in finite vc-dimension.
\newblock 14\penalty0 (4):\penalty0 463--479, 1995.
\newblock URL \url{https://doi.org/10.1007/BF02570718}.

\bibitem[Chan et~al.(2012)Chan, Grant, K{\"o}nemann, and
  Sharpe]{chan12-quasi-uniform-sampling}
Timothy~M. Chan, Elyot Grant, Jochen K{\"o}nemann, and Malcolm Sharpe.
\newblock Weighted capacitated, priority, and geometric set cover via improved
  quasi-uniform sampling.
\newblock In Yuval Rabani, editor, \emph{Proc. 23rd Annu. Symp. on Discrete
  Algorithms (SODA'12)}, pages 1576--1585. SIAM, 2012.
\newblock URL \url{https://doi.org/10.1137/1.9781611973099.125}.

\bibitem[Chaplick et~al.(2016)Chaplick, Fleszar, Lipp, Ravsky, Verbitsky, and
  Wolff]{chaplick-etal16:gd-few-lines-planes}
Steven Chaplick, Krzysztof Fleszar, Fabian Lipp, Alexander Ravsky, Oleg
  Verbitsky, and Alexander Wolff.
\newblock Drawing graphs on few lines and few planes.
\newblock In Yifan Hu and Martin N{\"{o}}llenburg, editors, \emph{Proc. 24th
  Int. Symp. Graph Drawing \& Network Vis. (GD'16)}, volume 9801 of
  \emph{LNCS}, pages 166--180. Springer, 2016.
\newblock URL \url{https://doi.org/10.1007/978-3-319-50106-2\_14}.

\bibitem[Chv{\'{a}}tal(1979)]{chvatal79-set-cover}
Vasek Chv{\'{a}}tal.
\newblock A greedy heuristic for the set-covering problem.
\newblock \emph{Math. Oper. Res.}, 4\penalty0 (3):\penalty0 233--235, 1979.
\newblock URL \url{https://doi.org/10.1287/moor.4.3.233}.

\bibitem[Coxeter(1950)]{coxeter1950}
Harold Scott~MacDonald Coxeter.
\newblock Self-dual configurations and regular graphs.
\newblock \emph{Bull. Amer. Math. Soc.}, 56\penalty0 (5):\penalty0 413--455, 09
  1950.
\newblock URL \url{https://doi.org/10.1090/S0002-9904-1950-08645-X}.

\bibitem[Cygan(2013)]{cygan:k-dim-matching}
Marek Cygan.
\newblock Improved approximation for 3-dimensional matching via bounded
  pathwidth local search.
\newblock In \emph{Proc. 54th Annu. Symp. on Found. of Comp. Sci. (FOCS'13)},
  pages 509--518. {IEEE} Comp. Soc., 2013.
\newblock URL \url{https://doi.org/10.1109/FOCS.2013.61}.

\bibitem[Evans et~al.(2019)Evans, Rzazewski, Saeedi, Shin, and
  Wolff]{evans-etal:hypergraph-drawing-3d}
William~S. Evans, Pawel Rzazewski, Noushin Saeedi, Chan{-}Su Shin, and
  Alexander Wolff.
\newblock Representing graphs and hypergraphs by touching polygons in {3D}.
\newblock In Daniel Archambault and Csaba~D. T{\'{o}}th, editors,
  \emph{Proc.27th Int. Symp. Graph Drawing \& Network Vis. (GD'19)}, volume
  11904 of \emph{LNCS}, pages 18--32. Springer, 2019.
\newblock URL \url{https://doi.org/10.1007/978-3-030-35802-0\_2}.

\bibitem[Gaur and Bhattacharya(2007)]{gaur2007covering}
Daya~Ram Gaur and Binay Bhattacharya.
\newblock Covering points by axis parallel lines.
\newblock In \emph{Proc. 23rd Europ. Workshop on Comput. Geom. (EuroCG'07)},
  pages 42--45. Citeseer, 2007.

\bibitem[Guruswami and Saket(2010)]{guruswami10-hypergraph-vc}
Venkatesan Guruswami and Rishi Saket.
\newblock On the inapproximability of vertex cover on $k$-partite $k$-uniform
  hypergraphs.
\newblock In Samson Abramsky, Cyril Gavoille, Claude Kirchner, Friedhelm~Meyer
  auf~der Heide, and Paul~G. Spirakis, editors, \emph{Proc. 37th Int. Colloq.
  on Automata, Languages and Programming (ICALP'10)}, volume 6198 of
  \emph{LNCS}, pages 360--371. Springer, 2010.
\newblock URL \url{https://doi.org/10.1007/978-3-642-14165-2\_31}.

\bibitem[Hassin and Megiddo(1991)]{hassin-megiddo-point-line-cover}
Refael Hassin and Nimrod Megiddo.
\newblock Approximation algorithms for hitting objects with straight lines.
\newblock \emph{Discrete Appl. Math.}, 30\penalty0 (1):\penalty0 29--42, 1991.
\newblock URL \url{https://doi.org/10.1016/0166-218X(91)90011-K}.

\bibitem[Hazan et~al.(2006)Hazan, Safra, and
  Schwartz]{hazan-etal06:k-set-packing}
Elad Hazan, Shmuel Safra, and Oded Schwartz.
\newblock On the complexity of approximating $k$-set packing.
\newblock \emph{Comput. Complex.}, 15\penalty0 (1):\penalty0 20--39, 2006.
\newblock URL \url{https://doi.org/10.1007/s00037-006-0205-6}.

\bibitem[Ilie et~al.(2005)Ilie, Solis-Oba, and Yu]{isy-rsnfauep-CPM05}
Lucian Ilie, Roberto Solis-Oba, and Sheng Yu.
\newblock Reducing the size of {NFA}s by using equivalences and preorders.
\newblock In Alberto Apostolico, Maxime Crochemore, and Kunsoo Park, editors,
  \emph{Combinatorial pattern matching}, pages 310--321. Springer, 2005.
\newblock URL \url{https://doi.org/10.1007/11496656_27}.

\bibitem[Kratsch et~al.(2014)Kratsch, Philip, and
  Ray]{kratsch14-point-line-cover}
Stefan Kratsch, Geevarghese Philip, and Saurabh Ray.
\newblock Point line cover: the easy kernel is essentially tight.
\newblock In Chandra Chekuri, editor, \emph{Proc. 25th Annu. Symp. on Discrete
  Algorithms (SODA'14)}, pages 1596--1606. SIAM, 2014.
\newblock URL \url{https://doi.org/10.1137/1.9781611973402.116}.

\bibitem[Kumar et~al.(2000)Kumar, Arya, and
  Ramesh]{kumar00-set-cover-intersection-one}
V.~S.~Anil Kumar, Sunil Arya, and H.~Ramesh.
\newblock Hardness of set cover with intersection 1.
\newblock In Ugo Montanari, Jos{\'e} D.~P. Rolim, and Emo Welzl, editors,
  \emph{Proc. Int. Colloq. on Automata, Languages and Programming (ICALP'00)},
  volume 1853 of \emph{LNCS}, pages 624--635. Springer, 2000.
\newblock URL \url{https://doi.org/10.1007/3-540-45022-X\_53}.

\bibitem[McKee and McMorris(1999)]{McKeeMcMorris1999}
Terry~A. McKee and F.~R. McMorris.
\newblock \emph{Topics in intersection graph theory}.
\newblock SIAM, 1999.

\bibitem[Pisanski and Randi{\'c}(2001)]{Pisanski_bridgesbetween}
Toma\v{z} Pisanski and Milan Randi{\'c}.
\newblock Bridges between geometry and graph theory.
\newblock In Catherine~A. Gorini, editor, \emph{Geometry at Work: Papers in
  Applied Geometry}, pages 174--194. Math. Assoc. of Amer., 2001.

\bibitem[Varadarajan(2010)]{varadarajan10-weighted-geom-set-cover}
Kasturi~R. Varadarajan.
\newblock Weighted geometric set cover via quasi-uniform sampling.
\newblock In Leonard~J. Schulman, editor, \emph{Proc. 42nd Symp. on Theory of
  Computing (STOC'10)}, pages 641--648. ACM, 2010.
\newblock URL \url{https://doi.org/10.1145/1806689.1806777}.

\end{thebibliography}
	
\end{document}